\tikzset{
  commutative diagrams/.cd, 
  arrow style=tikz, 
  diagrams={>=stealth}
}
\theoremstyle{definition}
\newenvironment{customthm}[1]
  {\innercustomthm}
  {\endinnercustomthm}
\def\@tocline#1#2#3#4#5#6#7{\relax
  \ifnum #1>\c@tocdepth 
  \else
    \par \addpenalty\@secpenalty\addvspace{#2}%
    \begingroup \hyphenpenalty\@M
    \@ifempty{#4}{%
      \@tempdima\csname r@tocindent\number#1\endcsname\relax
    }{%
      \@tempdima#4\relax
    }%
    \parindent\z@ \leftskip#3\relax \advance\leftskip\@tempdima\relax
    \rightskip\@pnumwidth plus4em \parfillskip-\@pnumwidth
    #5\leavevmode\hskip-\@tempdima
      \ifcase #1
       \or\or \hskip 1em \or \hskip 2em \else \hskip 3em \fi%
      #6\nobreak\relax
    \dotfill\hbox to\@pnumwidth{\@tocpagenum{#7}}\par
    \nobreak
    \endgroup
  \fi}
\newcounter{marginnote}
\DeclareMathAlphabet{\mathpzc}{OT1}{pzc}{m}{it}
\theoremstyle{definition}
\newtheorem{theorem}{Theorem}[section]
\newtheorem{corollary}[theorem]{Corollary}
\newtheorem{lemma}[theorem]{Lemma}
\newtheorem{remark}[theorem]{Remark}
\newtheorem*{runningexample*}{Running example}
\newtheorem*{aside*}{Aside}
\newtheorem{proposition-definition}[theorem]{Proposition-Definition}
\DeclareMathOperator{\ev}{ev}
\DeclareMathOperator{\vdim}{vdim}
\newcommand{\qu}{/\kern-.7ex/}
\newcommand{\lqu}{\backslash \kern-.7ex \backslash}
\newcommand{\on}{\operatorname}
\newcommand{\Tan}{\mathrm{T}}
\newcommand{\EE}{\mathbf{E}}
\newcommand{\st}{\star}
\newcommand{\op}[1]{\operatorname{#1}}
\newcommand{\ol}[1]{\overline{#1}}
\newcommand{\Pcal}{\mathcal{P}}
\newcommand{\Ycal}{\mathcal{Y}}
\newcommand{\Zcal}{\mathcal{Z}}
\newcommand{\bcd}{\begin{center}\begin{tikzcd}}
\newcommand{\ecd}{\end{tikzcd}\end{center}}
\newcommand{\e}{\mathrm{e}}
\newcommand{\Aaff}{\mathbb{A}}
\newcommand{\PP}{\mathbb{P}}
\newcommand{\OO}{\mathcal{O}}
\newcommand{\virt}{\op{virt}}
\newcommand{\HH}{H}
\newcommand{\Norm}{\mathrm{N}}
\newcommand{\Dcal}{\mathcal{D}}
\newcommand{\Xcal}{\mathcal{X}}
\newcommand{\Xfrak}{\mathfrak{X}}
\newcommand{\Ecal}{\mathcal{E}}
\newcommand{\Ccal}{\mathcal{C}}
\newcommand{\Mfrak}{\mathfrak{M}}
\newcommand{\Rder}{\mathrm{R}}
\newcommand{\Kup}{\mathsf{K}}
\newcommand{\Fup}{\mathsf{F}}
\newcommand{\Nup}{\mathsf{N}}
\newcommand{\Gup}{\mathsf{G}}
\NewDocumentCommand{\compatibilitydatum}{m m m m m m O{} O{} O{}}{
\begin{equation*} \begin{tikzcd}[ampersand replacement=\&]
  \: \arrow{r} \& {#1} \arrow{r} \arrow{d}{#7} \& {#2} \arrow{r} \arrow{d}{#8} \& {#3} \arrow{r}{[1]} \arrow{d}{#9} \& \: \\
  \: \arrow{r} \& {#4} \arrow{r} \& {#5} \arrow{r} \& {#6} \arrow{r} \& \:
\end{tikzcd} \end{equation*}}
\NewDocumentCommand{\commutingsquare}{m m m m o O{} O{} O{} O{}}{
\begin{equation}\begin{tikzcd}[ampersand replacement=\&] \label{#5}
  #1 \arrow{r}{#6} \arrow{d}{#7} \& #2 \arrow{d}{#8} \\
  #3 \arrow{r}{#9} \& #4
\end{tikzcd}\IfValueTF{#5}{\label{#5}}{} \end{equation}}
\NewDocumentCommand{\cartesiansquare}{m m m m O{} O{} O{} O{}}{
\begin{equation*}\begin{tikzcd}[ampersand replacement=\&]
  #1 \arrow{r}{#5} \arrow{d}{#6} \arrow[dr, phantom, "\square"] \& #2 \arrow{d}{#7} \\
  #3 \arrow{r}{#8} \& #4
\end{tikzcd} \end{equation*}}
\NewDocumentCommand{\cartesiansquarelabel}{m m m m m O{} O{} O{} O{}}{
\begin{tikzcd}[ampersand replacement=\&]
  #1 \arrow{r}{#6} \arrow{d}{#7} \arrow[dr, phantom, "\square"] \& #2 \arrow{d}{#8} \\
  #3 \arrow{r}{#9} \& #4
\end{tikzcd}\IfValueTF{#5}{\label{#5}}{}
}
\NewDocumentCommand{\triangleofspaces}{m m m O{} O{} O{}}{
\begin{tikzcd} [ampersand replacement=\&]
#1 \arrow{r}{#4} \arrow[bend right]{rr}{#5} \& #2 \arrow{r}{#6} \& #3
\end{tikzcd}}
\title[Local-orbifold correspondence for snc pairs]{The local-orbifold correspondence \\for simple normal crossings pairs}
\author{Luca Battistella, Navid Nabijou, Hsian-Hua Tseng and Fenglong You}
\keywords{}
\begin{document}

\begin{abstract} For $X$ a smooth projective variety and $D=D_1+\ldots+D_n$ a simple normal crossings divisor, we establish a precise cycle-level correspondence between the genus zero local Gromov--Witten theory of the bundle $\oplus_{i=1}^n \OO_X(-D_i)$ and the maximal contact Gromov--Witten theory of the multi-root stack $X_{D,\vec r}$. The proof is an implementation of the rank reduction strategy. We use this point of view to clarify the relationship between logarithmic and orbifold invariants.\end{abstract}

\maketitle

\tableofcontents

\section*{Introduction} 

\noindent Let $X$ be a smooth projective variety and $D=D_1+\ldots+D_n$ a simple normal crossings divisor with nef components $D_i$. We study the relationship between the genus zero local Gromov--Witten theory of $\oplus_{i=1}^n \mathcal O_X(-D_i)$ and the genus zero orbifold Gromov--Witten theory of the multi-root stack $X_{D,\vec r}$. Our main result is a positive answer to \cite[Conjecture 1.8]{TY20b}:\medskip 

\begin{customthm}{A}[Theorem~\ref{thm: local orbifold}] \label{thm:main} 
Let $\beta$ be a curve class on $X$ with $d_i:=D_i\cdot \beta>0$ for $i\in \{1,\ldots, n\}$. For $r_i$ pairwise coprime and sufficiently large, the following identity holds on the moduli space $\Kup_{0,m}(X,\beta)$ of stable maps to $X$
\begin{align*} \rho_\star [\Kup^{\max}_{0,(I_1,\ldots,I_m)}(X_{D,\vec r},\beta)]^{\virt} = \big( \Pi_{i=1}^n (-1)^{d_i-1} \big) \big( \cup_{j=1}^m \ev_j^\star (\cup_{i \in I_j} D_i) \big) \cap [\Kup_{0,m}(\oplus_{i=1}^n \OO_X(-D_i),\beta)]^{\virt}
\end{align*}
where $I_j\subseteq \{1,\ldots,n\}$ records the set of divisors which the marking $x_j$ is tangent to (see \S \ref{sec: geometric setup} for details), and $\rho$ is the morphism forgetting the orbifold structures.
\end{customthm}\medskip
\noindent This generalises the smooth divisor local-logarithmic correspondence \cite{vGGR} to the simple normal crossings setting, by interpreting the orbifold theory of the multi-root stack as an alternative to the logarithmic theory \cite{TY20c}.

When $D$ is smooth, Theorem~\ref{thm:main} follows from previous results equating both local and orbifold invariants with relative invariants \cite{ACW, vGGR, TY18}. For general $D$, the key observation is that both the local and orbifold theories satisfy a product formula over the space of stable maps to $X$. Theorem~\ref{thm:main} follows immediately, by bootstrapping from the smooth divisor case. This is another manifestation of the ``rank reduction'' technique in Gromov--Witten theory \cite{AbramovichChenLog,NR19}.\medskip

\subsection*{Logarithmic Gromov--Witten theory} Unlike the local and orbifold theories, the logarithmic theory does not satisfy a naive product formula over the space of stable maps to $X$. This observation was used in \cite{NR19} to produce counterexamples to the local-logarithmic conjecture. The same reasoning shows that the orbifold invariants also differ from the logarithmic invariants (and it is easy to find counterexamples beyond the maximal contact setting). In fact, Corollary~\ref{cor: orbifold equals naive} below equates the orbifold invariants with the so-called \emph{naive invariants}, introduced in \cite[\S 3]{NabijouThesis} and studied in \cite{NR19}:

\begin{customthm}{B}[Corollary~\ref{cor: orbifold equals naive}] \label{thm: orbifold equals naive introduction} The orbifold invariants of the multi-root stack coincide with the naive invariants, and hence differ from the logarithmic invariants. This holds for arbitrary choices of contact orders.\end{customthm}

In summary, there are four genus zero maximal contact theories associated to a simple normal crossings pair: logarithmic, orbifold, naive and local. They are related as follows:

\begin{center}
\hspace*{3em}
\begin{tikzcd}
\fbox{Logarithmic} \ar[rrr,leftrightarrow,"\text{\cite[Theorem~3.4]{NR19}}"] & & & \fbox{Naive} \ar[dd,equal,"\text{\cite[Lemma~3.1]{NR19}}"] \ar[llldd,equal,"\text{Theorem~\ref{thm: orbifold equals naive introduction}}" above,sloped] \\
& & & \\
\fbox{Orbifold} \ar[rrr,equal,"\text{Theorem~\ref{thm:main}}" below] & & & \fbox{Local}
\end{tikzcd}
\end{center}

\noindent The orbifold, local and naive theories all coincide up to combinatorial factors. The logarithmic theory differs in a more essential way, though there is an in-principle procedure which relates it to the other three. 

Despite the failure of the cycle-level local-logarithmic correspondence, there are many choices of targets and insertions for which the correspondence does hold on the numerical level. This occurs when the insertions kill the correction terms described in \cite[Theorem 3.4]{NR19}. In \cite{BBvG,BBvG2,BBvG3} numerous instances of the numerical local-logarithmic correspondence are established: for toric varieties, log Calabi--Yau surfaces and orbifold log Calabi--Yau surfaces; in \cite[\S 5]{NR19} the numerical correspondence is established for product geometries. As a corollary of Theorem~\ref{thm:main}, all of these logarithmic invariants coincide with the corresponding orbifold invariants.

These different theories are approached using very different techniques. Torus localisation, in various guises, has been applied to compute both orbifold and local invariants \cite{KlemmP08,CCITTwisted,FTYMirror}; logarithmic invariants, on the other hand, are typically calculated using tropical correspondence theorems and scattering diagrams \cite{NS06,GPS}. Depending on the context, one technique may be more effective than another. The above correspondences provide bridges between different techniques, thus increasing the roster of tools available for computations in Gromov--Witten theory as a whole.

\subsection*{Relation to previous work} The smooth divisor case of Theorem~\ref{thm:main} follows by combining the orbifold-logarithmic correspondence \cite{ACW, TY18} with the strong form \cite{FanWuWDVV,TY20b} of the local-logarithmic correspondence \cite{vGGR}. Some cases of Theorem~\ref{thm:main} for normal crossings divisors were numerically verified in \cite[\S5.2]{TY20b}, by computing the $J$-functions of both sides.

\subsection*{User's guide} We provide two approaches to rank reduction. The first (\S\ref{sec: rank reduction projection formula}) uses the iterative construction of root stacks and the projection formula, and relies on a local-orbifold correspondence for certain smooth orbifold pairs (Theorem~\ref{thm: local-orbifold for orbifold}). The second (\S\ref{sec: rank reduction product}) uses a product formula for orbifold invariants over the space of stable maps to the coarse moduli space (Theorem~\ref{thm: product formula}). This holds for arbitrary tangency orders but requires a positivity assumption. The identification of orbifold and naive invariants (Corollary~\ref{cor: orbifold equals naive}) is an immediate consequence.

\subsection*{Acknowledgements} We are grateful to Dhruv Ranganathan for conversations on related ideas, and comments on a draft manuscript. We also thank Michel van Garrel and Leo Herr for useful discussions. Finally we thank the referee for a thorough reading of the paper and many helpful comments.

\subsection*{Funding} L.B. is supported by the Deutsche Forschungsgemeinschaft (DFG, German Research Foundation) under Germany’s Excellence Strategy EXC-2181/1 - 390900948 (the Heidelberg STRUCTURES Cluster of Excellence). N.N. is supported by the Herchel Smith Fund. H.-H. T. is supported in part by Simons Foundation Collaboration Grant. F.Y. is supported by the EPSRC grant EP/R013349/1.

\section{Rank reduction I: projection formula}\label{sec: rank reduction projection formula}
\subsection{Geometric setup}\label{sec: geometric setup} Fix a smooth projective variety $X$ and a simple normal crossings divisor $D=D_1+\ldots+D_n \subseteq X$. For a tuple of pairwise coprime and sufficiently large integers $\vec r = (r_1,\ldots,r_n)$, we form the associated multi-root stack:
\begin{equation*} \Xcal = X_{D,\vec r}. \end{equation*}
Consider $m$ marked points $x_1,\ldots,x_m$ and fix an ordered partition of the index set $\{1,\ldots,n\}$ into disjoint subsets $I_1,\ldots,I_m$ such that $\cap_{i \in I_j} D_i$ is nonempty for each $j \in \{1,\ldots,m\}$. Fix a curve class $\beta \in \HH_2^+(X)$ such that $d_i := D_i \cdot \beta > 0$ for all $i$.

We consider a moduli problem of genus zero stable maps relative to $(X,D)$, such that the marking $x_j$ has maximal contact order $d_i$ to each divisor $D_i$ with $i \in I_j$. Notice that some of the $I_j$ may be empty, corresponding to markings with no tangency conditions. Some markings may have positive contact orders along several divisors simultaneously, which implies in particular that they should be mapped to the intersection.

This moduli problem determines associated discrete data for a moduli problem of orbifold stable maps to the multi-root stack $\Xcal$, by taking each marking $x_j$ to have twisting index:
\begin{equation*}s_j=\prod_{i \in I_j} r_i.\end{equation*}
The twisted sector insertion in
\begin{equation*} \mu_{s_j} = \prod_{i \in I_j} \mu_{r_i}\end{equation*}
coincides with the tuple of tangency orders, since the twisting indices on source and target are the same \cite[\S 2.1]{CadmanChen}. We denote the associated moduli space by
\begin{equation*} \Kup^{\max}_{0,(I_1,\ldots,I_m)}(\Xcal,\beta)\end{equation*}
and let $\rho$ denote the morphism which forgets the orbifold structures:
\begin{equation*} \rho \colon \Kup^{\max}_{0,(I_1,\ldots,I_m)}(\Xcal,\beta) \to \Kup_{0,m}(X,\beta).\end{equation*}

\subsection{Local-orbifold correspondence} 
Our main result is a cycle-level correspondence between the multi-root orbifold theory and the local theory of the associated split vector bundle, proving \cite[Conjecture 1.8]{TY20b}:
\begin{theorem}\label{thm: local orbifold} For $r_i$ sufficiently large we have:
\begin{equation*} \rho_\star [\Kup^{\max}_{0,(I_1,\ldots,I_m)}(\Xcal,\beta)]^{\virt} = \big( \Pi_{i=1}^n (-1)^{d_i-1} \big) \left( \cup_{j=1}^m \ev_j^\star (\cup_{i \in I_j} D_i) \right) \cap [\Kup_{0,m}(\oplus_{i=1}^n \OO_X(-D_i),\beta)]^{\virt}.
\end{equation*}\end{theorem}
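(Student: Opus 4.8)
The plan is to deduce the identity from the case of a single smooth divisor by the rank reduction strategy. Both sides of the asserted equality are cycles on the fixed moduli space $\Kup_{0,m}(X,\beta)$ of stable maps to the coarse target, of the same virtual dimension $\dim X - 3 + \langle c_1(X),\beta\rangle + m - D\cdot\beta$ (a quick count of ages and obstruction ranks confirms this), and the idea is to show that each side factors as a product of $n$ contributions, the $i$-th of which depends only on the pair $(X,D_i)$ and on the root stack in the $D_i$-direction. Granting such product formulas on both sides, the theorem reduces to matching the $i$-th factors, which is exactly the content of the smooth divisor case.

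For $n=1$ the target is a single root stack $X_{D_1,r_1}$ and the bundle is the line bundle $\OO_X(-D_1)$. I would combine the orbifold--logarithmic correspondence \cite{ACW,TY18}, which for $r_1$ sufficiently large identifies $\rho_\star[\Kup^{\max}_{0,(I_1,\ldots,I_m)}(X_{D_1,r_1},\beta)]^{\virt}$ with the pushforward of the logarithmic virtual class of $(X,D_1)$, with the strong form of the local--logarithmic correspondence \cite{vGGR,FanWuWDVV,TY20b}, which identifies the latter with $(-1)^{d_1-1}\,\ev_{j}^\star(D_1)\cap[\Kup_{0,m}(\OO_X(-D_1),\beta)]^{\virt}$, where $j$ is the unique index with $1\in I_j$. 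This gives the $n=1$ case, and in particular isolates the sign $(-1)^{d_1-1}$ and the evaluation factor $\ev_j^\star(D_1)$ that will be multiplied together in general.

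On the local side the product structure is immediate: since $d_i=D_i\cdot\beta>0$, the line bundle $\OO_X(-D_i)$ has no sections along any genus zero stable map, so I would invoke additivity of the obstruction theory over a direct sum to get $[\Kup_{0,m}(\oplus_{i=1}^n\OO_X(-D_i),\beta)]^{\virt}=\big(\prod_{i=1}^n e(E_i)\big)\cap[\Kup_{0,m}(X,\beta)]^{\virt}$, with $E_i=R^1\pi_\star f^\star\OO_X(-D_i)$ of rank $d_i-1$. Because $I_1,\ldots,I_m$ is a partition of $\{1,\ldots,n\}$, each $i$ lies in a unique $I_{j(i)}$, so $\cup_{j=1}^m\ev_j^\star(\cup_{i\in I_j}D_i)=\cup_{i=1}^n\ev_{j(i)}^\star(D_i)$. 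Hence the right-hand side of the theorem rewrites as $\big(\prod_{i=1}^n(-1)^{d_i-1}\,\ev_{j(i)}^\star(D_i)\cap e(E_i)\big)\cap[\Kup_{0,m}(X,\beta)]^{\virt}$, a product over $i$ of precisely the classes produced in the base case.

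The orbifold side is the technical heart. I want to show $\rho_\star[\Kup^{\max}_{0,(I_1,\ldots,I_m)}(\Xcal,\beta)]^{\virt}=\big(\prod_{i=1}^n\alpha_i\big)\cap[\Kup_{0,m}(X,\beta)]^{\virt}$, where $\alpha_i$ is the class with $\rho_{i,\star}[\Kup^{\max}(X_{D_i,r_i},\beta)]^{\virt}=\alpha_i\cap[\Kup_{0,m}(X,\beta)]^{\virt}$ coming from the single-root problem. The cleanest route uses the modular description: a genus zero orbifold stable map to $\Xcal=X_{D,\vec r}$ is a genus zero stable map $f$ to $X$ together with, for each $i$, an $r_i$-th root of $(f^\star\OO_X(D_i),f^\star s_{D_i})$ on the source, the maximal contact condition forcing the orbifold structure at $x_j$ for $i\in I_j$; so $\Kup^{\max}(\Xcal,\beta)$ is the fibre product over the stack of twisted prestable curves of the $n$ single-root moduli spaces, and one checks that its perfect obstruction theory is the direct sum of the single-root ones. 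Pushing forward by $\rho$ and applying the projection formula then yields the product formula (Theorem~\ref{thm: product formula}); an alternative is to build $\Xcal$ iteratively and push forward one root at a time, using the orbifold-ambient version of the base case (Theorem~\ref{thm: local-orbifold for orbifold}) at each stage, as in \S\ref{sec: rank reduction projection formula}. Combining the two product formulas with the base case forces $\alpha_i=(-1)^{d_i-1}\,\ev_{j(i)}^\star(D_i)\cap e(E_i)$, and the theorem follows. I expect the main obstacle to be exactly this orbifold product formula: verifying the claimed decomposition of the virtual class requires a careful comparison of obstruction theories relative to the stack of twisted prestable curves, together with a genuine use of positivity ($D_i$ nef, $r_i$ large) to guarantee a clean fibre product, the vanishing of the higher pushforwards controlling the root-gerbe moduli, and the absence of the extra boundary contributions which — as \cite{NR19} shows — obstruct the analogous statement in the logarithmic theory.
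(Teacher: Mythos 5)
Your overall strategy (rank reduction from the smooth divisor base case, splitting of the local obstruction bundle, and a product structure on the orbifold side over $\Kup_{0,m}(X,\beta)$) matches the paper's philosophy, and your base case and local-side factorisation are correct. But your primary route — realising $\Kup^{\max}(\Xcal,\beta)$ as a fibre product of the single-root spaces and deducing a product formula for the pushed-forward virtual classes — is exactly the paper's second approach (Theorem~\ref{thm: product formula}), and it does not prove the theorem in the stated generality. The obstruction-theoretic comparison you need hinges on equipping the diagonal $\Delta_{\Kup(X)}\colon \Kup(X)\to\Kup(X)\times_{\Mfrak}\Kup(X)$ with the obstruction theory $(\pi_{0\star}f_0^{\star}\Tan_X)^{\vee}[1]$; this is only a perfect obstruction theory when $\Rder^1\pi_{0\star}f_0^{\star}\Tan_X=0$, i.e.\ when $X$ is convex (or after virtual pullback from a convex embedding). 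Nefness of the $D_i$ and largeness of the $r_i$, which is all the theorem assumes, buy you nothing here — this is the same failure mode that makes the naive/product class differ from the logarithmic class in \cite{NR19}. Also, the fibre product of the single-root spaces is not literally $\Kup^{\max}(\Xcal,\beta)$: the fibre product of the twisted source curves over the coarse curve must be normalised, so one only gets a proper degree-one comparison map, which needs the argument via relative coarse moduli spaces and \cite[Theorem 4.1]{Mano12} rather than a direct identification.

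Your fallback — rooting one divisor at a time and applying a local--orbifold correspondence for the smooth orbifold pair $(\Zcal,\Dcal_n)$ at each stage — is indeed the paper's actual proof in full generality (\S\ref{sec: rank reduction projection formula}), but you invoke Theorem~\ref{thm: local-orbifold for orbifold} as a known input without offering any argument for it. That statement is the technical heart of the whole proof: it is not a formal consequence of the smooth-divisor case, because the ambient space $\Zcal$ is now an orbifold whose twisted sectors encode the tangencies along $D_1,\ldots,D_{n-1}$. The paper proves it by degenerating to the normal cone of $\Dcal_n\subseteq\Zcal$, killing all graphs with a $\Zcal$-vertex carrying more than one edge by an excess-intersection argument, killing all non-fibre-class $\Ycal$-vertices by a virtual pushforward property to maps into $\Dcal_n$ (a step that genuinely uses the absence of generic stabilisers on $\Dcal_n$ and a parity/dimension count with $r_n$ large and coprime to the other $r_i$), and finally computing the one surviving graph, whose fibre-class contribution gives $(-1)^{d_n-1}/d_n$ against the gluing factor $d_n$. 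Without supplying this (or an equivalent substitute valid for nef, non-convex targets), your proposal has a genuine gap: the route you develop in detail needs a hypothesis the theorem does not grant, and the route that works is cited rather than proved.
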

The case $n=1$ follows by combining the smooth divisor local-logarithmic correspondence \cite{vGGR} in its strong form (see \cite[Introduction]{FanWuWDVV} or \cite[Equation (2)]{TY20b}), together with the smooth divisor logarithmic-orbifold correspondence \cite{ACW,TY18}.

\begin{proof} We proceed by induction on $n$. The base case $n=1$ is discussed above. For the induction step, consider the root stack
\begin{equation*}\Zcal = X_{(D_1,\ldots,D_{n-1}),(r_1,\ldots,r_{n-1})}.\end{equation*}
Letting $p \colon \Zcal \to X$ be the morphism to the coarse moduli space and $\Dcal_n=p^{-1} D_n$, we have:
\begin{equation*} \Xcal = \Zcal_{\Dcal_n,r_n}.\end{equation*}
The ordered partition $(I_1,\ldots,I_m)$ of $\{1,\ldots,n\}$ induces a partition $(J_1,\ldots,J_m)$ of $\{1,\ldots,n-1\}$ by setting $J_j = I_j \setminus \{ n\}$. Consider the tower of moduli spaces:
\bcd
\Kup^{\max}_{0,(I_1,\ldots,I_m)}(\Xcal,\beta) \ar[r,"\psi"] \ar[rr, bend right=20, "\rho"] & \Kup_{0,(J_1,\ldots,J_m)}^{\max}(\Zcal,\beta) \ar[r,"\varphi"] & \Kup_{0,m}(X,\beta).
\ecd
The induction hypothesis gives
\begin{equation}\label{eqn: induction hypothesis bootstrap} 
	\varphi_\star [\Kup^{\max}_{0,(J_1,\ldots,J_m)}(\Zcal,\beta)]^{\virt} = \big( \Pi_{i=1}^{n-1} (-1)^{d_i-1} \big) \left( \cup_{j=1}^m \ev_j^\star (\cup_{i \in J_j} D_i) \right) \cap [\Kup_{0,m}(\oplus_{i=1}^{n-1} \OO_X(-D_i),\beta)]^{\virt}
\end{equation}
while Theorem~\ref{thm: local-orbifold for orbifold} below establishes a local-orbifold correspondence for the smooth orbifold pair $(\Zcal,\Dcal_n)$, giving
\begin{align} \psi_{\star} [\Kup^{\max}_{0,(I_1,\ldots,I_m)}(\Xcal,\beta)]^{\virt} & = (-1)^{d_n-1} \ev_{j_n}^\star \Dcal_n \cap [\Kup_{0,(J_1,\ldots,J_m)}^{\max}(\OO_{\Zcal}(-\Dcal_n),\beta)]^{\virt} \nonumber \\
& \label{eqn: local=orbifold for multi-root} = (-1)^{d_n-1} \ev_{j_n}^\star \Dcal_n \cdot \e(\Rder^1 \pi_\star f^\star \OO_{\Zcal}(-\Dcal_n)) \cap [\Kup_{0,(J_1,\ldots,J_m)}^{\max}(\Zcal,\beta)]^{\virt} \end{align}
where $j_n \in \{1,\ldots,m\}$ is the unique index such that $n \in I_{j_n}$. Since $\Dcal_n = p^\star D_n$ is pulled back from $X$ we have
\begin{align*} \ev_{j_n}^\star \Dcal_n  & = \varphi^\star \ev_{j_n}^\star D_n \\
	 \e(\Rder^1 \pi_\star f^\star \OO_{\Zcal}(-\Dcal_n)) & = \varphi^\star \e(\Rder^1 \pi_\star f^\star \OO_X(-D_n))
\end{align*}
The latter equation follows from the projection formula and the following fact. The pullback $\varphi^\star C$ of the universal curve on $\Kup_{0,m}(X,\beta)$ coincides with the coarsening of the universal curve
\[ \mathcal C \to \Kup_{0,(J_1,\ldots,J_m)}^{\max}(\Zcal,\beta)\]
which makes the composite universal map $\mathcal C\to\mathcal Z\to X$ representable. Therefore the structure sheaves of the universal curves are preserved by pushforward \cite[Theorem~3.1]{AOV}. The result then follows from \eqref{eqn: induction hypothesis bootstrap} and \eqref{eqn: local=orbifold for multi-root}, the projection formula for $\varphi$ and the splitting of the obstruction bundle for the local theory of $\oplus_{i=1}^n \OO_X(-D_i)$.\end{proof}

\subsection{Local-orbifold correspondence for smooth orbifold pairs} \label{sec: proof of local orbifold for orbifolds} It remains to establish the local-orbifold correspondence for the smooth orbifold pair $(\Zcal,\Dcal_n)$, used in the proof above.

\begin{theorem}\label{thm: local-orbifold for orbifold} With notation as in the proof of Theorem~\ref{thm: local orbifold}, we have:
\begin{equation*}\psi_\star [\Kup^{\max}_{0,(I_1,\ldots,I_m)}(\Xcal,\beta)]^{\virt} = (-1)^{d_n-1} \ev_{j_n}^\star \Dcal_n \cap [\Kup_{0,(J_1,\ldots,J_m)}^{\max}(\OO_{\Zcal}(-\Dcal_n),\beta)]^{\virt}.\end{equation*}\end{theorem}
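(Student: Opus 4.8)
The strategy is to reduce Theorem~\ref{thm: local-orbifold for orbifold} to the already-known base case $n=1$ of Theorem~\ref{thm: local orbifold} by observing that the proof of that base case never uses that the ambient space is a scheme: it only uses that the boundary divisor is smooth. Here $\Dcal_n = p^{-1}D_n$ is a smooth divisor in the smooth Deligne--Mumford stack $\Zcal$, so $(\Zcal,\Dcal_n)$ is a \emph{smooth pair} in the stacky sense, and the orbifold markings recording the $(J_1,\ldots,J_m)$-tangencies to $\Dcal_1,\ldots,\Dcal_{n-1}$ are spectators carried along without interacting with the root construction $\Xcal = \Zcal_{\Dcal_n,r_n}$ along $\Dcal_n$. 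Thus I would chain together, as classes on the common base $\Kup^{\max}_{0,(J_1,\ldots,J_m)}(\Zcal,\beta)$, an orbifold--logarithmic comparison and a logarithmic--local comparison.

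For the first link I would use the orbifold--logarithmic correspondence for the smooth pair $(\Zcal,\Dcal_n)$: for $r_n$ sufficiently large,
\[ \psi_\star [\Kup^{\max}_{0,(I_1,\ldots,I_m)}(\Xcal,\beta)]^{\virt} = \epsilon^{\log}_\star [\Kup^{\max}_{0,(J_1,\ldots,J_m)}(\Zcal/\Dcal_n,\beta)]^{\virt}, \]
where $\Kup^{\max}_{0,(J_1,\ldots,J_m)}(\Zcal/\Dcal_n,\beta)$ denotes logarithmic stable maps to $(\Zcal,\Dcal_n)$ with maximal tangency $d_n$ along $\Dcal_n$ at $x_{j_n}$ (and the prescribed twisted sectors along $\Dcal_1,\ldots,\Dcal_{n-1}$ at the remaining markings), and $\epsilon^{\log}$ forgets the logarithmic structure. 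This is exactly \cite{ACW,TY18} when $\Zcal$ is a variety; since the stabilization $r_n\to\infty$ and the identification of virtual classes in those proofs are local on the target in the smooth topology and only use smoothness of $\Dcal_n$, they apply verbatim with $\Zcal$ a smooth Deligne--Mumford stack, the residual stabilizers of $\Zcal$ transverse to $\Dcal_n$ playing no role.

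For the second link I would invoke the strong form of the local--logarithmic correspondence \cite{vGGR,FanWuWDVV,TY20b} for $(\Zcal,\Dcal_n)$:
\[ \epsilon^{\log}_\star [\Kup^{\max}_{0,(J_1,\ldots,J_m)}(\Zcal/\Dcal_n,\beta)]^{\virt} = (-1)^{d_n-1}\, \ev_{j_n}^\star \Dcal_n \cap [\Kup_{0,(J_1,\ldots,J_m)}^{\max}(\OO_{\Zcal}(-\Dcal_n),\beta)]^{\virt}. \]
The input here is the degeneration of $\Zcal$ to the normal cone of $\Dcal_n$ together with the splitting and vanishing analysis of the contributing strata; all of this makes sense and behaves exactly as in the scheme case for a smooth pair of Deligne--Mumford stacks, and nef-ness of $D_n$ (hence of $\Dcal_n = p^\star D_n$) supplies the global generation of $\Rder^1\pi_\star f^\star\OO_{\Zcal}(-\Dcal_n)$ on the boundary that is needed to run the comparison. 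Composing the two displayed identities yields Theorem~\ref{thm: local-orbifold for orbifold}.

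I expect the substance to be verification rather than new geometry: the main task is to check that \cite{ACW,TY18,vGGR,FanWuWDVV,TY20b} go through for the smooth pair $(\Zcal,\Dcal_n)$ --- that the expanded and degenerate targets built from $\Zcal$ and the projectivized normal cone of $\Dcal_n$ remain Deligne--Mumford and carry the expected perfect obstruction theories, and that the forgetful and evaluation maps interact correctly with the spectator orbifold markings. (The compatibilities of $\ev_{j_n}$ and of $\Rder^1\pi_\star f^\star\OO_{\Zcal}(-\Dcal_n)$ under coarsening that are used downstream, in the proof of Theorem~\ref{thm: local orbifold}, follow from \cite[Theorem 3.1]{AOV} as recorded there.)
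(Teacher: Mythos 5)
Your overall architecture (chain an orbifold--logarithmic comparison with a logarithmic--local comparison for the smooth pair $(\Zcal,\Dcal_n)$) is the natural guess, and it is how the $n=1$ base case of Theorem~\ref{thm: local orbifold} is handled for varieties. But the load-bearing step in your proposal is precisely the assertion that \cite{vGGR,FanWuWDVV,TY20b} (and, for the first link, \cite{ACW,TY18}) ``apply verbatim'' when the ambient space is a smooth Deligne--Mumford stack carrying spectator twisted markings. That assertion is what Theorem~\ref{thm: local-orbifold for orbifold} actually amounts to, and it is not available off the shelf: the paper proves it by re-running the degeneration-to-the-normal-cone argument of \cite{vGGR} for the orbifold target, using the orbifold degeneration formula of \cite{AF11} together with \cite[Theorem 2.2]{ACW} and \cite{TY18} at the vertices, rather than by citing a stacky local--log correspondence. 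In that re-run the extra twisted sectors are emphatically not spectators: the gluing takes place over the rigidified inertia stack of $\Dcal_n$, whose components are rigidified strata of $\Ecal_1+\ldots+\Ecal_{n-1}$; parity forces the gluing node on the $\Ycal$-vertex to carry ages opposite to those at $x_{j_n}$; the virtual-pushforward vanishing in \S\ref{sec: Y-vanishing} rests on a dimension count that the paper explicitly notes can fail if the divisor has generic stabiliser; and the special-graph contribution in \S\ref{sec: special graph contribution} is a computation on $\PP(r_n,1)\times\prod_{i\in J_{j_n}}\mathcal{B}\mu_{r_i}$ in which the $1/r_i$ automorphism factors must cancel against the Chen--Ruan pairing factors of \cite[\S 5.2.3]{AF11}. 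The paper also cautions that the multiplicity of the special graph changes in the presence of generic stabilisers and that the correspondence is unclear in greater generality, so a blanket ``the proofs are local on the target and only use smoothness of $\Dcal_n$'' cannot substitute for these verifications.

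Concretely, then, the gap is that both displayed identities in your argument are unproven in the generality you need: the second is the theorem itself in logarithmic disguise, and the first (orbifold = logarithmic for the stack pair $(\Zcal,\Dcal_n)$ with large $r_n$ and auxiliary twisted sectors at the other markings) would also require an argument, whereas the paper sidesteps needing it as a stand-alone statement by working directly with expanded/rooted targets in the degeneration formula. A secondary point: your appeal to nefness of $D_n$ as supplying ``global generation of $\Rder^1\pi_\star f^\star\OO_{\Zcal}(-\Dcal_n)$'' is not the mechanism used; what positivity buys is $d_n=D_n\cdot\beta>0$, and the actual vanishing of the unwanted graph contributions comes from the excess-intersection argument for $\Zcal$-vertices and the virtual pushforward property for $\Ycal$-vertices. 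To repair your proposal you would need to carry out, for the pair $(\Zcal,\Dcal_n)$ with $\Dcal_n$ pulled back from the coarse space, essentially the same stratum-by-stratum analysis the paper performs -- at which point you have reproduced its proof rather than bypassed it.
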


We establish this result only in the setting we require, namely when $\Zcal$ is a multi-root stack and $\Dcal_n$ is a divisor pulled back from the coarse moduli space. The proof adapts the arguments of \cite{vGGR} but subtleties arise due to the twisted sectors of $\Zcal$, which encode tangencies with respect to the divisors $D_1,\ldots,D_{n-1}$. These complicate a crucial dimension count in the proof (\S \ref{sec: Y-vanishing}) and also affect the final multiplicity calculation (\S \ref{sec: special graph contribution}).

\begin{remark}
	It is unclear whether the correspondence holds in greater generality. If the divisor has generic stabiliser then the dimension count (\S \ref{sec: Y-vanishing}) can fail, so at best the result must be established via other methods. Moreover in this case the multiplicity arising from the contribution of the special graph (\S \ref{sec: special graph contribution}) is different, hinting that any generalisation of the correspondence will in fact require a new formulation.
\end{remark}

\subsubsection{Setting up the degeneration formula} Let $\Xfrak$ be the degeneration to the normal cone of $\Dcal_n \subseteq \Zcal$, and let $M$ be the degeneration to the normal cone of $D_n\subseteq X$. 
\begin{lemma}
 $\Xfrak$ is the root stack of $M$ along the strict (equivalently, total) transforms of the divisors $D_i \times \Aaff^1$ for $i\in \{1,\ldots,n-1\}$.
\end{lemma}
\begin{proof}
 The divisors $D_i \times \Aaff^1$ intersect the blowup centre $D_n\times\{0\}$ transversely, hence the strict and total transforms coincide. Denote these by $T_i \subseteq M$. Each $T_i$ admits a root on $\Xfrak$, namely the pullback of the rooted divisor $\Dcal_i \subseteq \Zcal$ along the composition $\Xfrak \to \Zcal \times \Aaff^1 \to \Zcal$. By the universal property of the root stack we obtain a morphism
 \[ \Xfrak \to M_{(T_1,\ldots,T_{n-1}),(r_1,\ldots,r_{n-1})}\] 
and a local calculation shows that this is an isomorphism. 
\end{proof}

The general fibre of the family $\Xfrak \to \Aaff^1$ is:
\begin{equation*} \Zcal = X_{(D_1,\ldots,D_{n-1}),(r_1,\ldots,r_{n-1})}. \end{equation*}
The central fibre consists of two components $\Zcal$ and $\Ycal$ meeting along $\Dcal_n$. Here $\Ycal$ is obtained by rooting the bundle $Y=\PP_{D_n}(\Norm_{D_n|X}\oplus \OO_{D_n})$ along the divisors $\pi^{-1}(D_i \cap D_n)$ for $i\in\{1,\ldots,n-1\}$. There is a cartesian square
\bcd
\Ycal \ar[r] \ar[d,"\pi" left] \ar[rd,phantom,"\square"] & Y \ar[d,"\pi"] \\
\Dcal_n \ar[r] & D_n
\ecd
where we note that $\Dcal_n$ is itself a multi-root stack along a simple normal crossings divisor
\begin{equation*} \Dcal_n = (D_n)_{(E_1,\ldots,E_{n-1}),(r_1,\ldots,r_{n-1})}\end{equation*}
where $E_i = D_i \cap D_n \subseteq D_n$. We let $\Ecal_i = E_i/r_i \subseteq \Dcal_n$ be the corresponding gerby divisor.

Each connected component of the rigidified inertia stack $\ol{\mathcal{I}}(\Dcal_n)$ is a rigidification of a closed stratum $\bigcap_{i\in I}\Ecal_i$ of the divisor $\Ecal_1+\ldots+\Ecal_{n-1} \subseteq \Dcal_n$ (including the stratum $\Dcal_n$ corresponding to the empty intersection). This rigidification is obtained from $\bigcap_{i\in I} E_i$ by rooting along the intersection with $E_j$ for all $j\notin I$. This description of the twisted sectors is crucial for understanding the structure of the degeneration formula below.

Finally, $\Dcal_0\subseteq\Ycal$ will denote the section of the bundle consisting of its intersection with $\Zcal$, while $\Dcal_\infty\subseteq \Ycal$ will denote the intersection of the central fibre of $\Xfrak$ with the strict transform $\mathfrak{D}$ of $\Dcal_n \times \Aaff^1$.

Consider $\mathfrak L=\on{Tot} \mathcal O_{\mathfrak X}(-\mathfrak D)$. This forms a family of (non-proper) targets over $\Aaff^1$. The general fibre is $\on{Tot} \mathcal O_{\mathcal Z}(-\Dcal_n)$ and the central fibre is a union of $\Zcal \times \mathbb A^1$ and $\on{Tot} \mathcal O_{\mathcal Y}(- \Dcal_\infty)$.

We apply the degeneration formula \cite{AF11} to $\mathfrak{L}$. The components of the central fibre are indexed by bipartite graphs $\Gamma$. The vertices $v \in \Gamma$ are partitioned into $\Zcal$-vertices and $\Ycal$-vertices, and the associated moduli spaces $\Kup_v$ are spaces of maps to expansions of the rooted pairs
\begin{equation*} (\Zcal\times \Aaff^1,\Dcal_n\times\Aaff^1) \qquad \text{and} \qquad (\OO_{\Ycal}(-\Dcal_\infty),\Dcal_0\times\Aaff^1) \end{equation*}
respectively, as defined in \cite[\S 3]{AF11}. Working with expansions is inconsequential, since the pushforward of the virtual class matches that of the space of maps to the corresponding root stack without expansions \cite[Theorem 2.2]{ACW}. We denote the twisting index by $r_n$. In the original formulation \cite[\S 3.4]{AF11}, $r_n$ is required to be divisible by all contact orders at the gluing nodes, but by \cite{TY18} this condition can be removed without affecting the invariants. We assume therefore that $r_n$ is large and coprime to each of $r_1,\ldots,r_{n-1}$.

The component $\Kup_{\Gamma}$ associated to $\Gamma$ maps to the fibre product
\begin{equation} \label{diag: degeneration formula gluing}
\begin{tikzcd}
\Kup_{\Gamma} \ar[r,"\Phi"] & \Fup_\Gamma \ar[r] \ar[d] \ar[rd,phantom,"\square"] & \prod_v \Kup_v \ar[d] \\
\, & \prod_e \ol{\mathcal{I}}(\Dcal_n\times \Aaff^1) \ar[r,"\Delta"] & \prod_e \ol{\mathcal{I}}(\Dcal_n\times \Aaff^1)^2	
\end{tikzcd}
\end{equation}
with respect to the evaluation maps to the rigidified inertia stack of the join divisor. The virtual class of $\Kup_\Gamma$ pushes forward to a multiple of the virtual class with which $\Fup_\Gamma$ is endowed by virtue of this diagram; the virtual degree of the morphism $\Phi$ is well-understood \cite[Proposition 5.9.1]{AF11}. Each space $\Kup_v$ decomposes as a disjoint union of substacks obtained as preimages of connected components of the inertia stack. 

After pushing forward to the space of stable maps to $\Zcal$, the degeneration formula gives an equality of classes
\begin{equation} \label{eqn: degeneration formula}
[\Kup^{\max}_{0,(J_1,\ldots,J_m)}(\OO_{\Zcal}(-\Dcal_n),\beta)]^{\on{virt}}=\sum_{\Gamma}\dfrac{1}{\lvert E(\Gamma)\rvert !} \cdot \Psi_\star [\Kup_{\Gamma}]^{\virt} \end{equation}
where $\Psi$ is the composition:
\begin{equation*} \Kup_{\Gamma} \to \Kup(\mathfrak{L}_0) \to \Kup(\Zcal).\end{equation*}
Let $j=j_n$ be the index of the marking at which we wish to impose tangency to $D_n$ (as in the proof of Theorem~\ref{thm: local orbifold}) and cap both sides of \eqref{eqn: degeneration formula} with $\ev_j^\star \mathfrak{D}$. The left-hand side gives the local invariants of $\OO_{\Zcal}(-\Dcal_n)$ capped with $\ev_j^\star \Dcal_n$. Our aim is to show that all but one of the terms on the right-hand side vanish.
	
\subsubsection{First vanishing: $\Zcal$-vertices}
Suppose first that there is a $\Zcal$-vertex $v \in \Gamma$ with $k>1$ adjacent edges. For each adjacent edge $e$ the corresponding evaluation map factors (locally) through a specific component of the rigidified inertia stack. Such a component is obtained by rigidifying a (possibly empty) intersection of the divisors $\Ecal_i$ in $\Dcal_n$. We denote this by $\Ecal_e$. The product of evaluation maps thus takes the form:
\begin{equation*} \Kup_v \to \prod_e (\Ecal_e \times \Aaff^1). \end{equation*}
However, since the source curve is proper, the map to affine space is constant. This ensures that all the point evaluations agree, i.e. the above map factors through the closed substack
\begin{equation*} \bigg(\prod_e \Ecal_e \bigg) \times \Aaff^1 \hookrightarrow \prod_e (\Ecal_e \times \Aaff^1).
\end{equation*}
We now follow the argument of \cite[Lemma 3.1]{vGGR}. There is a cartesian diagram:
\bcd
\Fup_{\Gamma} \ar[r]\ar[d] \ar[rd,phantom,"\square" right=0.49] & \Kup_v \times \prod_{v^\prime}\Kup_{v^\prime}\ar[d]\\
\left(\prod_e \Ecal_e\right) \times \Aaff^1 \times \prod_{e^\prime} \ol{\mathcal{I}}(\Dcal_n\times\Aaff^1) \ar[r,"\widetilde\Delta"]\ar[d,"\iota"]\ar[rd,phantom,"\square"]  & \left( \left(\prod_e \Ecal_e\right) \times \Aaff^1\right)^2 \times \prod_{e^\prime} \ol{\mathcal{I}}(\Dcal_n\times\Aaff^1)^2 \ar[d,"\iota^\prime"] \\
\prod_e (\Ecal_e \times \Aaff^1) \times \prod_{e^\prime} \ol{\mathcal{I}}(\Dcal_n\times\Aaff^1) \ar[r,"\Delta"]  & \prod_e (\Ecal_e \times \Aaff^1)^2 \times \prod_{e^\prime} \ol{\mathcal{I}}(\Dcal_n\times\Aaff^1)^2.
\ecd
The excess intersection formula \cite[Theorem 6.3]{FultonIntersectionTheory} gives 
\[\Delta^!=\operatorname{c}_{k-1}(E)\cap\widetilde\Delta^!\]
where $E$ is the excess bundle, which in this case \cite[Example 6.3.2]{FultonIntersectionTheory} is equal to
\[E=\widetilde\Delta^\star \Norm_{\iota^\prime}/\Norm_{\iota}\]
which is clearly trivial if $k>1$. It follows that $\Delta^!=0$ and so the contribution of $\Gamma$ vanishes.

\subsubsection{Second vanishing: $\Ycal$-vertices}\label{sec: Y-vanishing}
We conclude that the only graphs which can contribute are those with a single $\Ycal$-vertex. Let $v \in \Gamma$ be such a vertex. This corresponds to a space of expanded maps to the rooted pair $(\OO_{\Ycal}(-\Dcal_\infty),\Dcal_0\times \Aaff^1)$. Recall that $\Ycal$ is a projective bundle over the divisor $\Dcal_n$. Suppose that in the discrete data for $\Kup_v$ either:
\begin{itemize}
\item the curve class is not a multiple of the fibre class, or;
\item there are at least three special points.
\end{itemize}
This ensures that the corresponding moduli space of stable maps $\Kup_v(\Dcal_n)$ to the base of the bundle is well-defined. There is a projection
\begin{equation}\label{eqn: projection to D_n mapping space}  \Kup_v \to \Kup_v(\Dcal_n)\end{equation}
and we claim that the virtual class pushes forward to zero along this morphism. Since there is a natural bijection between the twisted sectors in $\Ycal$ and the twisted sectors in $\Dcal_n$, the age contributions to the virtual dimensions coincide. From this one deduces
$$\vdim \Kup_v = \vdim \Kup_v(\Dcal_n) + 2$$
and so the claim holds if we show that \eqref{eqn: projection to D_n mapping space} satisfies the virtual pushforward property \cite[Definition 3.1]{ManolachePush} (we note that this dimension count can fail if $\Dcal_n$ is allowed to have generic stabiliser). By \cite[Theorem 2.2]{ACW}, it is equivalent to show that
\begin{equation*} \Kup_v(\Ycal_{\Dcal_0,r_n}) \to \Kup_v(\Dcal_n) \end{equation*}
satisfies the virtual pushforward property. For this we adapt the arguments of \cite[\S4]{vGGR}. Let:
\begin{equation*}s=\Pi_{i=1}^{n} r_i, \qquad t = \Pi_{i=1}^{n-1} r_i=s/r_n.	
\end{equation*}
By representability, the stabiliser groups of the source curve of any stable map to $\Ycal_{\Dcal_0,r_n}$ (respectively $\Dcal_n$) must have order dividing $s$ (respectively $t$). We denote the monoids of effective curve classes by:
\begin{equation*} A= \HH_2^+(\Ycal_{\Dcal_0,r_n}), \qquad B = \HH_2^+(\Dcal_n).\end{equation*}
Now consider the following diagram, involving moduli stacks of prestable twisted curves with homology weights
\begin{equation} \label{eqn: virtual pushforward diagram}
\begin{tikzcd}
	\Kup_v(\Ycal_{\Dcal_0,r_n}) \ar[r,"\upsilon"] \ar[rd, bend right] & \Gup \ar[r] \ar[d] \ar[rd,phantom,"\square"right] & \Kup_v(\Dcal_n) \ar[d] \\
\, & \Mfrak^{s-\operatorname{tw}}_{A} \ar[r,"\nu"] & \Mfrak^{t-\operatorname{tw}}_{B}
\end{tikzcd}
\end{equation}
in which the morphism $\nu$ contracts unstable curve components with vertical homology class and coarsens the $r_n$--twisting (this morphism is the composition of an \'etale cover followed by a root construction).

From the short exact sequence of relative tangent bundles associated to the smooth projection $\Ycal_{\Dcal_0,r_n} \to \Dcal_n$ we obtain a compatible triple for the triangle in \eqref{eqn: virtual pushforward diagram}. We note that unlike when the target is a variety, we may have
\begin{equation*}\HH^1(\Ccal, f^\star \Tan_{\Ycal_{\Dcal_0,r_n}/\Dcal_n})\neq 0 \end{equation*}
if components of $\Ccal$ are mapped into the rooted divisor. Thus the morphism $\upsilon$ is not typically smooth, but it is always virtually smooth which is sufficient. The arguments given in \cite[Lemma~5.1 and Proposition~5.3]{vGGR} then apply verbatim, showing that the virtual pushforward property holds and that the contribution of $\Gamma$ vanishes.

\subsubsection{Contribution of the special graph}\label{sec: special graph contribution}
We conclude that the only graphs $\Gamma$ which contribute are those with a single $\Ycal$-vertex $v_1$ with at most two special points and curve class a multiple of the fibre class $F$. Since $v_1$ must contain at least one node as well as the marking $x_{j}$ we are left with a single graph $\Gamma$, consisting of:
\begin{itemize}
\item a $\Zcal$-vertex $v_0$ supporting all the markings except $x_{j}$ and with curve class $\beta_0=\beta$;
\item a $\Ycal$-vertex $v_1$ supporting the marking $x_{j}$ and with curve class $\beta_1=d_n \cdot F$ for $d_n=\Dcal_n\cdot \beta$.
\end{itemize}
These are connected along a single edge $e$ and the diagram \eqref{diag: degeneration formula gluing} reduces to the following:
\bcd
\Kup_\Gamma \ar[r] & \Fup_{\Gamma} \ar[r] \ar[d] \ar[rd,phantom,"\square"] & \Kup_{v_0} \times \Kup_{v_1} \ar[d] \\
\, & \ol{\mathcal{I}}(\Dcal_n \times\Aaff^1) \ar[r] & \ol{\mathcal{I}}(\Dcal_n \times\Aaff^1)^2.
\ecd
Recall that the component $\Kup_{\Gamma}$ of the central fibre is virtually finite over the fibre product $\Fup_{\Gamma}$. Let
\begin{equation*} J_j \subseteq \{1,\ldots,n-1\}\end{equation*}
be the subset recording those divisors amongst $D_1,\ldots,D_{n-1}$ which the marking $x_{j}$ is tangent to. This tangency is encoded in twisted sector insertions which are imposed on both the general and central fibres. In $\Kup_{v_1}$ these correspond to age constraints with respect to the bundles:
\begin{equation*}\OO_\Ycal(\pi^{-1}\Ecal_i) =\pi^\star \OO_{\Dcal_n}(\Ecal_i).\end{equation*}
 Since the curve class is a multiple of a fibre, these bundles have zero degree when pulled back to the source curve. It follows from parity considerations that $\Kup_{v_1}$ is empty unless the nodal marking $q$ corresponding to the edge $e$ also carries twisted sector insertions, which are opposite to those at $x_{j}$. This means we must have
\begin{equation*} \operatorname{age}_{q} \pi^\star  \OO_{\Dcal_n}(\Ecal_i)= 1 - \operatorname{age}_{x_j} \pi^\star \OO_{\Dcal_n}(\Ecal_i) \end{equation*}
for all $i \in J_{j}$. By the inversion of the band in the evaluation maps, we then have the opposite ages for the nodal marking $q$ on $\Kup_{v_0}$. It follows that the vertex $v_0$ contributes the orbifold invariants of the root stack $\Zcal_{\Dcal_n,r_n} = \Xcal$ with twisted sector insertions imposing maximal tangency of a single marking $q$ with respect to all divisors $D_i$ for $i \in I_j=J_j \cup \{n\}$, as required.

For the contribution of $v_1$, notice that $\ev_q$ takes values in a component of $\ol{\mathcal{I}}(\Dcal_n)$ which is naturally isomorphic to the rigidification of:
\begin{equation*} \bigcap_{i \in J_j} \Ecal_i. \end{equation*}
We denote this rigidification by $\Ecal_{J_j}$. A direct calculation shows that:
\begin{equation*} \operatorname{vdim} \Kup_{v_1} = \dim \Ecal_{J_j} + 1.\end{equation*}
There is a divisorial insertion $\ev_j^\star \Dcal_\infty$ on $\Kup_{v_1}$ and the contribution of $v_1$ can be expressed as the unique $m \in \mathbb{Q}$ such that:
\begin{equation*} (\ev_q)_\star ( \ev_j^\star \Dcal_\infty \cap [\Kup_{v_1}]^{\virt} )= m\cdot [\Ecal_{J_j}].
\end{equation*}
This can be computed by restricting to the fibre of a general point in $\Ecal_{J_j}$. Working locally around this point, the gerbes $\Ecal_i$ become trivial, so that we obtain a space of maps to:
\begin{equation*} \PP(r_n,1) \times \prod_{i \in J_j} \mathcal{B}\mu_{r_i}.\end{equation*}
The maps to the $\mathcal{B}\mu_{r_i}$ are uniquely determined, and each has an automorphism factor of $1/r_i$. This cancels with the automorphism factor arising from the Chen--Ruan intersection pairing on the inertia stack of the join divisor \cite[\S 5.2.3]{AF11}.

We are left with a computation on $\PP(r_n,1)$. The contribution is a local invariant capped with an insertion of $\ev_{j}^\star (\infty)$. The latter insertion can be factored out via the divisor axiom; the obstruction bundle of the local theory is pulled back along the map forgetting a marking, since the structure sheaves of the universal curves are preserved by pushforward along stabilisation. The remaining local invariant can be computed by localisation. The end result \cite[(21)]{JPTNotes} is
\begin{equation*} (d_n) \left( \dfrac{(-1)^{d_n-1}}{d_n^2}\right) = \dfrac{(-1)^{d_n-1}}{d_n} \end{equation*}
which combines with the gluing factor $d_n$ appearing in the degeneration formula to complete the proof of Theorem~\ref{thm: local-orbifold for orbifold}.\qed

\section{Rank reduction II: relative product formula}\label{sec: rank reduction product}

\noindent Having established the main Theorem~\ref{thm: local orbifold}, we now present an alternative approach, also based on the rank reduction philosophy. While this approach is less general, requiring a positivity assumption, we have chosen to include it since the ``relative product formula'' it employs provides valuable insight into the geometry of maps to the multi-root stack, and clarifies the relationship to logarithmic invariants. Moreover the main result does not require the maximal contact assumption.

\subsection{Convex embeddings} \label{sec: convex embeddings}

\noindent As before, fix a smooth projective variety $X$ and a simple normal crossings divisor $D=D_1+\ldots+D_n \subseteq X$. To ease notation we will assume from now on that $n=2$; the extension to the general case follows by induction.

We will assume throughout this section that there exists a simple normal crossings pair $(P,H=H_1+H_2)$ with $P$ convex, and a closed embedding $X \hookrightarrow P$ such that $D_i = X \cap H_i$ for each $i$. In this situation we call $(X,D)$ a \textbf{convex embedding}. Two important cases encompassed by this definition are:
\begin{enumerate}
\item $X$ convex and $D_i$ arbitrary;
\item $X$ arbitrary and $D_i$ very ample.	
\end{enumerate}
All definitions and proofs will be given first in the case where $X$ itself is convex, and then extended to convex embeddings via virtual pullback.

\subsection{Relative product formula for root stacks} \label{sec: product formula orbifold} As in \S\ref{sec: rank reduction projection formula}, we fix discrete data for a moduli problem of genus zero relative stable maps to $(X,D)$: a curve class $\beta \in \HH_2^+(X)$, a number of marked points $m$, and specified tangency orders to $D_1$ and $D_2$ at the marked points. Note that we do not require the contact orders to be maximal at this point.

Choose large coprime integers $r_1$ and $r_2$ and consider the root stacks:
\begin{equation*} \Xcal_1 = X_{D_1,r_1} \qquad \Xcal_2 = X_{D_2,r_2}. \end{equation*}
These both have $X$ as their coarse moduli space. For each $\Xcal_i$ we can set up data for a moduli space of orbifold stable maps, by taking every marking to have twisting index $r_i$. The twisted sector insertion in $\mu_{r_i}$ coincides with the tangency order, since the twisting indices on source and target are the same \cite[\S 2.1]{CadmanChen}. Consider now the multi-root stack:
\begin{equation*}\Xcal = \Xcal_1 \times_X \Xcal_2.\end{equation*}
Just as before, we may construct discrete data for a space of orbifold stable maps to $\Xcal$. Markings tangent to both $D_1$ and $D_2$ will have twisting index $r_1r_2$, and the twisted sector insertion is the unique element of $\mu_{r_1r_2}$ which maps to the correct pair of tangencies under the canonical isomorphism $\mu_{r_1r_2} = \mu_{r_1} \times \mu_{r_2}$. From now on the discrete data will be suppressed from the notation

In this section we show that the theory of orbifold stable maps satisfies a relative product formula over the space of maps to the coarse moduli space. To be more precise:
\begin{theorem} \label{thm: product formula} There exists a diagram
\begin{equation} \label{eqn: main cartesian square}
\begin{tikzcd}
\Kup(\Xcal) \ar[r,"\nu"] & \mathcal P \ar[r] \ar[d,"\rho"] \ar[rd,phantom,"\Box"] & \Kup(\Xcal_1) \times \Kup(\Xcal_2) \ar[d,"\rho_1\times \rho_2"] \\
& \Kup(X) \ar[r,"\Delta_{\Kup(X)}"] & \Kup(X) \times \Kup(X)
\end{tikzcd}
\end{equation}
such that, when $X$ is convex, we have:
\begin{equation} \label{eqn: product formula orbifold} \nu_*[\Kup(\Xcal)]^{\virt} = \Delta_{\Kup(X)}^!\left( [\Kup(\Xcal_1)]^{\virt} \times [\Kup(\Xcal_2)]^{\virt} \right).\end{equation}
\end{theorem}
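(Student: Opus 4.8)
The plan is to set $\Pcal := \Kup(\Xcal_1) \times_{\Kup(X)} \Kup(\Xcal_2)$, so the right-hand square of \eqref{eqn: main cartesian square} is cartesian by construction, and to build $\nu$ by \emph{simultaneously partially coarsening the source curve}. Given a twisted stable map $f \colon \Ccal \to \Xcal$, a marking tangent to both $D_1$ and $D_2$ carries a $\mu_{r_1 r_2}$-gerbe on $\Ccal$; since $r_1, r_2$ are coprime, $\mu_{r_1 r_2} = \mu_{r_1} \times \mu_{r_2}$, and coarsening the $\mu_{r_2}$-factor (resp.\ the $\mu_{r_1}$-factor) at every special point produces a twisted curve $\Ccal_1$ (resp.\ $\Ccal_2$) with the same coarse space as $\Ccal$, through which the composite $\Ccal \to \Xcal \to \Xcal_i$ factors; this yields a twisted stable map $\Ccal_i \to \Xcal_i$. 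The resulting pair $(\Ccal_1 \to \Xcal_1,\ \Ccal_2 \to \Xcal_2)$ agrees over $X$ and is the point $\nu(f)$. Intrinsically, $\nu$ is the base change to $\Pcal$ of the partial-coarsening morphism $\mathfrak{M}^{r_1 r_2} \to \mathfrak{M}^{r_1} \times_{\mathfrak{M}} \mathfrak{M}^{r_2}$ of stacks of prestable twisted curves; coprimality makes the latter an isomorphism — a $\mu_{r_1 r_2}$-structure at a marking or node is recovered from its $\mu_{r_1}$- and $\mu_{r_2}$-coarsenings by an explicit monomial base change — so $\nu$ is an isomorphism, in particular representable and proper.

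For the virtual class statement we exploit convexity. When $X$ is convex, $\Kup(X) = \Kup_{0,m}(X,\beta)$ is a smooth Deligne--Mumford stack, so $\Delta_{\Kup(X)}$ is a regular embedding, $\Delta_{\Kup(X)}^!$ is the associated refined Gysin homomorphism, and $[\Kup(X)]^{\virt} = [\Kup(X)]$. The decomposition $\Xcal = \Xcal_1 \times_X \Xcal_2$ yields, with $g \colon \Xcal \to X$ and $p_i \colon \Xcal \to \Xcal_i$, a distinguished triangle of cotangent complexes
\[ g^\star \mathbf{L}_X \to p_1^\star \mathbf{L}_{\Xcal_1} \oplus p_2^\star \mathbf{L}_{\Xcal_2} \to \mathbf{L}_{\Xcal} \xrightarrow{[1]}. \]
Applying $\Rder \pi_\star f^\star(-)$ along the universal curve and dualising — and using that coarsening morphisms of twisted curves preserve structure sheaves, so that by the projection formula and \cite[Theorem 3.1]{AOV} the derived pushforwards computed on $\Ccal$, $\Ccal_1$ and $\Ccal_2$ agree on objects pulled back from the respective targets — exhibits the relative perfect obstruction theory of $\Kup(\Xcal)$ over $\Kup(X)$ (after accounting for the twisting of the source curve) as the homotopy fibre product, pulled back along $\nu$, of the relative obstruction theories of $\Kup(\Xcal_1)$ and $\Kup(\Xcal_2)$ over $\Kup(X)$. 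As in \S\ref{sec: Y-vanishing}, one should note that $\Rder^1 \pi_\star f^\star \Tan_{\Xcal/X}$ need not vanish — components mapped into the root divisors contribute — so these relative obstruction theories carry a genuine obstruction part, which is exactly why the virtual classes in question are not merely fibre products of fundamental classes.

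With these compatible obstruction theories in hand, the identity \eqref{eqn: product formula orbifold} follows from the product-formula mechanism for virtual classes over a smooth base — Behrend's product formula, equivalently the functoriality of virtual pull-back — exactly as for the naive invariants in \cite{NR19, NabijouThesis}: the right-hand side is the virtual class attached to the obstruction theory obtained from the external product $[\Kup(\Xcal_1)]^{\virt} \times [\Kup(\Xcal_2)]^{\virt}$ by correcting along the regular embedding $\Delta_{\Kup(X)}$, whose normal bundle is $\Tan_{\Kup(X)}$; this correction is precisely the homotopy fibre product over $\Kup(X)$, which we have identified after pull-back along $\nu$ with the obstruction theory of $\Kup(\Xcal)$. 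I expect the main obstacle to be the content of the second paragraph: making precise the compatibility of the three obstruction theories across the change of source curve, in particular verifying that the relative obstruction theory of $\Kup(\Xcal)$ is perfect and is honestly the homotopy fibre product of the two root-stack obstruction theories, given that $\Xcal_i \to X$ is not smooth ($\mathbf{L}_{\Xcal_i/X}$ being a torsion complex supported on the root divisor) and that the three moduli problems live over curves with different twisting. Finally, the passage from $X$ convex to a general convex embedding $X \hookrightarrow P$ is obtained by virtual pull-back along $\Kup(X) \to \Kup(P)$ applied compatibly to the four corners of \eqref{eqn: main cartesian square}, as announced at the start of this section.
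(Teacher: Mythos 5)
Your construction of $\nu$, the fibre-product obstruction-theory comparison (via the triangle relating $\Tan_{\Xcal}$, $p_1^\star\Tan_{\Xcal_1}\oplus p_2^\star\Tan_{\Xcal_2}$ and $p^\star\Tan_X$, the AOV coarsening fact, and convexity making $\Delta_{\Kup(X)}$ a regular embedding with functoriality of virtual pullback) are all in line with the paper's argument. But there is one genuine error on which your conclusion rests: the claim that coprimality makes the partial-coarsening morphism $\Mfrak^{r_1r_2-\op{tw}}\to\Mfrak^{r_1-\op{tw}}\times_{\Mfrak}\Mfrak^{r_2-\op{tw}}$ an isomorphism, and hence that $\nu$ is an isomorphism. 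It is not: it is the \emph{normalisation}, which is proper of degree one but not an isomorphism. Locally at an $r_1r_2$-twisted node with coarse smoothing parameter $s$, the fibre product is $[\mathbb{A}^1_{t_1}/\mu_{r_1}]\times_{\mathbb{A}^1_s}[\mathbb{A}^1_{t_2}/\mu_{r_2}]$ with underlying scheme $\{t_1^{r_1}=t_2^{r_2}\}$, which is non-normal (a unibranch singularity) for coprime $r_1,r_2\geq 2$; its normalisation is $\mathbb{A}^1_t$ with $t_1=t^{r_2}$, $t_2=t^{r_1}$, which is exactly where $\Mfrak^{r_1r_2-\op{tw}}$ sits. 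The same phenomenon appears at the level of universal curves: at a doubly-twisted marking the fibre product $\Ccal_1\times_C\Ccal_2$ is $[(x_1^{r_1}=x_2^{r_2})/\mu_{r_1r_2}]$, again non-normal, and the twisted curve mapping to $\Xcal$ is its normalisation. Since normalisation does not commute with arbitrary base change, a family of pairs $(\Ccal_1\to\Xcal_1,\Ccal_2\to\Xcal_2)$ over a general base need not lift to a family of $r_1r_2$-twisted maps, so $\nu$ is a proper bijection of degree one on points, not an isomorphism of stacks.

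Because of this, "the obstruction theories agree after pullback along $\nu$" does not by itself give $\nu_\star[\Kup(\Xcal)]^{\virt}=[\Pcal]^{\virt}$: the two relative obstruction theories live over different bases ($\Mfrak^{r_1r_2-\op{tw}}$ versus $\Mfrak^{r_1-\op{tw}}\times_{\Mfrak}\Mfrak^{r_2-\op{tw}}$) which differ precisely by this normalisation. The missing ingredient is the commutativity of virtual pullback with pushforward along a proper degree-one morphism (\cite[Theorem 4.1]{Mano12}, in the spirit of Costello's pushforward theorem), applied to the cartesian square of $\nu$ over the stacks of twisted prestable curves; this is exactly how the paper closes the argument. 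With that substitution in place of the isomorphism claim, the rest of your outline goes through and coincides in substance with the paper's proof.
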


\begin{proof} \label{sec: proof of square}
The morphism $\nu \colon \Kup(\Xcal)\to\Pcal$ is obtained by taking relative coarse moduli spaces, see \cite[\S9]{AbramovichVistoli} and \cite[Theorem 3.1]{AOV}. For each $i$ the partial coarsening $\Ccal \to \Ccal_i$ is initial amongst maps $\Ccal \to \Ycal$ through which the map $\Ccal \to \Xcal_i$ factors and is representable.

We call a twisted curve an \textbf{$r$-curve} (for some positive integer $r$) if the order of every stabiliser group divides $r$. Since a stable map $\Ccal_i \to \Xcal_i$ must be representable, it follows that $\Ccal_i$ is an $r_i$-curve.

A point of the fibre product $\mathcal P$ consists of the data of two stable maps $\Ccal_1 \to \Xcal_1$ and $\Ccal_2 \to \Xcal_2$ which induce the same underlying map $C \to X$ on coarse moduli.

\begin{lemma} Suppose that $r_1$ and $r_2$ are coprime, and let $\Ccal_1,\Ccal_2$ be $r_1$-,$r_2$-curves with the same coarse curve $C$. Then the normalisation
\begin{equation}\label{eqn: Ccal normalisation fibre product} \Ccal = \left(\Ccal_1 \times_C \Ccal_2\right)^{\sim} \end{equation}
is a twisted curve.
\end{lemma}
\begin{proof} If $p \in C$ is a marking on the coarse curve with local equation $z$, then the local model for each $\Ccal_i$ is given by:
\begin{equation*} \Ccal_i = \left[ (x_i^{r_i} = z)/\mu_{r_i}\right]. \end{equation*}
The fibre product is therefore $[(x_1^{r_1}=x_2^{r_2})/\mu_{r_1r_2}]$. Note that this is not a twisted curve (unless $r_1=1$ or $r_2=1$). On the other hand since $r_1$ and $r_2$ are coprime, the normalisation of the fibre product is given by
\[ [\mathbb A^1_y/\mu_{r_1r_2}]\]
where $y^{r_1}=x_2,y^{r_2}=x_1$. The computation around a node is entirely analogous except that the base must also be normalised, around the divisor where the node persists.
\end{proof}

\begin{remark}
This phenomenon is related to the issue of saturation in logarithmic geometry, via the correspondence between twisted curves and extensions of logarithmic structures \cite{Ols07}. Indeed, the monoid $\mathbb{N} e_1 \oplus_{r_1,\mathbb{N},r_2}\mathbb{N} e_2$ is not saturated: in the groupification $\mathbb{Z}^2/(r_1,-r_2)\cong\mathbb{Z}$, the image of the generator $e_1$ is divisible by $r_2$
\[ e_1=(a_1r_1+a_2r_2)e_1=a_1r_2e_2+a_2r_2e_1=r_2(a_1e_2+a_2e_1)\]
where $a_1,a_2 \in \mathbb{Z}$ are such that $a_1r_1+a_2r_2=1$. Similarly the image of $e_2$ is divisible by $r_1$.
\end{remark}

The twisted curve $\Ccal$ carries a natural map to $\Xcal$ which is clearly representable. We thus have a cartesian diagram
\begin{equation}\label{diagram}
\begin{tikzcd}
\Kup(\Xcal)\ar[r,"\nu"]\ar[d,"\varphi"]\ar[dr,phantom,"\Box"] & \Pcal \ar[d,"\psi"] \\
\Mfrak^{r_1r_2-\op{tw}}\ar[r] & \Mfrak^{r_1-\op{tw}}\times_{\Mfrak}\Mfrak^{r_2-\op{tw}}
\end{tikzcd}
\end{equation}
where the bottom morphism is the normalisation. The morphism $\varphi$ carries a natural perfect obstruction theory. We will now construct a compatible perfect obstruction theory for $\psi$. The diagram
\bcd
\Pcal\ar[r]\ar[d] \ar[rd,phantom,"\square"] & \Kup(\Xcal_1)\times_{\Mfrak}\Kup(\Xcal_2)\ar[d] \\
\Kup(X)\ar[r,"\Delta"] & \Kup(X)\times_{\Mfrak}\Kup(X)
\ecd
is cartesian. Using the convexity assumption, there is a perfect obstruction theory for $\Delta$ given by
\begin{equation}\label{eqn: pi f coarse} (\pi_{0\st}f_0^{\st}\Tan_X)^\vee[1] \end{equation}
where $\pi_0$ is the universal coarse curve. This pulls back to a perfect obstruction theory for $\Pcal \to \Kup(\Xcal_1)\times_{\Mfrak} \Kup(\Xcal_2)$. The latter space carries a perfect obstruction theory over $\Mfrak^{r_1-\op{tw}}\times_{\Mfrak}\Mfrak^{r_2-\op{tw}}$ given by:
\begin{equation}\label{eqn: pi f smooth root} (\pi_{1\st}f_1^{\st}\Tan_{\Xcal_1}\oplus\pi_{2\st}f_2^{\st}\Tan_{\Xcal_2})^\vee. \end{equation}
We thus have a triangle with perfect obstruction theories
\bcd
\Pcal \ar[r, "\eqref{eqn: pi f coarse}"] \ar[rr,bend right=20, "\psi"] & \Kup(\Xcal_1)\times_{\Mfrak} \Kup(\Xcal_2) \ar[r,"\eqref{eqn: pi f smooth root}"] & \Mfrak^{r_1-\op{tw}}\times_{\Mfrak}\Mfrak^{r_2-\op{tw}}
\ecd
and wish to build an obstruction theory for $\psi$ giving a compatible triple. There are natural morphisms $\Tan_{\Xcal_i} \to p_i^\star \Tan_X$ on $\Xcal_i$. We therefore obtain:
\begin{equation*} \pi_{1\st}f_1^{\st}\Tan_{\Xcal_1}\oplus\pi_{2\st}f_2^{\st}\Tan_{\Xcal_2} \to \pi_{0\st}f_0^{\st}\Tan_X.\end{equation*}
(As in the proof of Theorem~\ref{thm: local orbifold}, this follows from the projection formula and the fact that the structure sheaves of the various universal curves are preserved by pushforwards along coarsening maps, see \cite[Theorem 3.1]{AOV}.) Dualising, shifting and taking the cone, we obtain:
\begin{equation*} (\pi_{1\st}f_1^{\st}\Tan_{\Xcal_1}\oplus\pi_{2\st}f_2^{\st}\Tan_{\Xcal_2})^\vee \to \EE_\psi \to (\pi_{0\star} f_0^\star \Tan_X)^\vee[1] \xrightarrow{[1]}.\end{equation*}
Several applications of the Four Lemmas then show that $\EE_\psi$ is a relative perfect obstruction theory for $\psi$.

Finally, we wish to compare the obstruction theories of $\psi$ and $\varphi$ in \eqref{diagram}. For any root stack $\Ycal=Y_{D,r}$ with gerby divisor $\Dcal$, a local computation gives the following exact sequence:
\[0\to \Tan_{\Ycal}\to p^\star \Tan_Y\to \OO_{(r-1)\Dcal}(r\Dcal)\to 0.\]
From this we obtain a morphism of short exact sequences
\bcd
0\ar[r]& \Tan_{\Xcal}\ar[r]\ar[d] & p^\star \Tan_X\ar[r]\ar[d] & \bigoplus_{i=1}^2 \OO_{(r_i-1)\Dcal_i}(r_i\Dcal_i)\ar[d,equal]\ar[r]& 0\\
0\ar[r]& p_1^\star \Tan_{\Xcal_1}\oplus p_2^\star \Tan_{\Xcal_2}\ar[r]  & p^\star \Tan_X\oplus p^\star\Tan_X\ar[r] & \bigoplus_{i=1}^2 \OO_{(r_i-1)\Dcal_i}(r_i\Dcal_i)\ar[r]& 0
\ecd
and an application of the Snake Lemma produces the following exact sequence on $\Xcal$:
\begin{equation} 0\to \Tan_{\Xcal}\to  p_1^\star\Tan_{\Xcal_1}\oplus p_2^\star\Tan_{\Xcal_2}\to  p^\star\Tan_X\to 0 \end{equation}
Applying $\pi_\star f^\star$ we see that the pullback of the perfect obstruction theory for $\psi$ coincides with the perfect obstruction theory for $\varphi$ in \eqref{diagram}. The theorem then follows by the commutativity of virtual pullback and pushforward \cite[Theorem 4.1]{Mano12}, since the bottom horizontal arrow in \eqref{diagram} is proper of degree one.\end{proof}

\subsection{Local-orbifold correspondence} With the relative product formula established, we can now give a straightforward proof of Theorem~\ref{thm: local orbifold} in the convex setting.
\begin{proof}[Proof of Theorem~\ref{thm: local orbifold} for convex targets] Consider again the diagram \eqref{eqn: main cartesian square}. Theorem~\ref{thm: product formula} gives the following relation in $\Kup(X)$:
\begin{equation*} (\rho\circ\nu)_\star [\Kup(\Xcal)]^{\virt} = (\rho_1)_\star [\Kup(\Xcal_1)]^{\virt} \cdot (\rho_2)_\star [\Kup(\Xcal_2)]^{\virt}.\end{equation*}
Specialising to the maximal contact setting, the result immediately follows from the local-orbifold correspondence for smooth divisors and the splitting of the obstruction bundle for the local theory of $\OO_X(-D_1)\oplus \OO_X(-D_2)$.\end{proof}

The above result can be generalised to convex embeddings via virtual pullback methods. This is a fairly routine affair: see for instance \cite[Appendix A]{BN19}. Since the arguments in \S \ref{sec: rank reduction projection formula} already establish the result in full generality, we omit the details here.

\subsection{Comparison with naive invariants} \label{sec: naive} Recall from \cite{NabijouThesis,NR19} that for a simple normal crossings pair $(X,D)$ with $X$ convex, the naive virtual class is defined (in genus zero) as the product of logarithmic virtual classes
\begin{equation*} [\Nup(X|D)]^{\virt} := \prod_{i=1}^n (\rho_i)_\star [\Kup(X|D_i)]^{\virt}\end{equation*}
inside $\Kup(X)$. We also obtain a refined class on the fibre product $\Nup(X|D)$, but we are mostly interested in its pushforward to $\Kup(X)$. This definition extends to arbitrary convex embeddings via virtual pullback. An immediate consequence of Theorem~\ref{thm: product formula} is an identification of orbifold and naive invariants.

\begin{corollary} \label{cor: orbifold equals naive} For $(X,D)$ a convex embedding, the relation
\begin{equation*} \rho_\star[\Kup(X_{D,\vec{r}})]^{\virt} = [\Nup(X|D)]^{\virt}\end{equation*} 
holds inside $\Kup(X)$ (for compatible choices of contact orders).\end{corollary}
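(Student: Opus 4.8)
The plan is to read the statement off from the relative product formula (Theorem~\ref{thm: product formula}) combined with the orbifold--logarithmic correspondence for smooth divisors \cite{ACW,TY18} and the definition of the naive class. As throughout \S\ref{sec: rank reduction product}, I would first treat the case where $X$ is convex; the case of a general convex embedding then follows by the usual virtual pullback argument, exactly as in \cite[Appendix A]{BN19}. For clarity I describe the argument when $n=2$, the general case being obtained by iterating Theorem~\ref{thm: product formula} (as indicated in \S\ref{sec: rank reduction product}) into an $n$--fold product formula over $\Kup(X)$.

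Push the identity \eqref{eqn: product formula orbifold} forward along $\rho\colon\Pcal\to\Kup(X)$ in the cartesian square \eqref{eqn: main cartesian square}. Since proper pushforward commutes with refined Gysin pullback along the diagonal, one gets
\[\rho_\star\Delta_{\Kup(X)}^!\big([\Kup(\Xcal_1)]^{\virt}\times[\Kup(\Xcal_2)]^{\virt}\big)=(\rho_1)_\star[\Kup(\Xcal_1)]^{\virt}\cdot(\rho_2)_\star[\Kup(\Xcal_2)]^{\virt}\]
inside $\Kup(X)$; this is precisely the manipulation already carried out in the proof of Theorem~\ref{thm: local orbifold} for convex targets, and it uses no maximality hypothesis. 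Because $\rho\circ\nu$ is the morphism forgetting all orbifold structures on $\Xcal=X_{D,\vec r}$, the left-hand side of the pushed-forward formula is $\rho_\star[\Kup(X_{D,\vec r})]^{\virt}$. For the factors on the right, the orbifold--logarithmic correspondence for the smooth root stack $\Xcal_i=X_{D_i,r_i}$ gives $(\rho_i)_\star[\Kup(\Xcal_i)]^{\virt}=(\rho_i)_\star[\Kup(X|D_i)]^{\virt}$ once $r_i$ is large --- and this holds for arbitrary contact orders, since the twisted sector insertion records the tangency order exactly \cite[\S 2.1]{CadmanChen}. Comparing with the definition $[\Nup(X|D)]^{\virt}=\prod_i(\rho_i)_\star[\Kup(X|D_i)]^{\virt}$ then yields the claim for $n=2$.

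For general $n$ one iterates: writing $X_{D,\vec r}=\Zcal_{\Dcal_n,r_n}$ with $\Zcal=X_{(D_1,\ldots,D_{n-1}),(r_1,\ldots,r_{n-1})}$ and $\Dcal_n=p^\star D_n$, Theorem~\ref{thm: product formula} applies to the transverse pair $\Zcal_{\Dcal_n,r_n}$ over $\Zcal$ (its proof is insensitive to replacing one smooth root stack by a multi-root stack, as it only uses transversality and iterativity of the root construction). Pushing forward along $\rho$ as above expresses $\rho_\star[\Kup(X_{D,\vec r})]^{\virt}$ as the product of $(\rho_n)_\star[\Kup(X|D_n)]^{\virt}$ with the $\rho$--pushforward of the orbifold class of $\Zcal$, and the induction hypothesis rewrites the latter as $\prod_{i=1}^{n-1}(\rho_i)_\star[\Kup(X|D_i)]^{\virt}$, completing the induction.

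The one genuinely delicate point is the bookkeeping of discrete data under these reductions: one must verify that the twisted sectors of $X_{D,\vec r}$ at each marking decompose compatibly with the iterated root construction, so that after pushforward the orbifold invariants appearing on both sides genuinely correspond to the logarithmic invariants of $(X,D_i)$ with the prescribed tangency vectors. This is the same twisted-sector combinatorics exploited in \S\ref{sec: proof of local orbifold for orbifolds}, but here it is essentially routine precisely because no maximality of contact is imposed; everything else in the argument is formal.
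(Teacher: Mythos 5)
Your proposal is correct and follows the same route the paper intends: the paper treats the corollary as an immediate consequence of Theorem~\ref{thm: product formula}, namely pushing the product formula forward to $\Kup(X)$ (exactly the manipulation in the convex-target proof of Theorem~\ref{thm: local orbifold}, with no maximality used), identifying each smooth-root factor $(\rho_i)_\star[\Kup(\Xcal_i)]^{\virt}$ with the logarithmic class of $(X,D_i)$ via \cite{ACW,TY18}, and comparing with the definition of $[\Nup(X|D)]^{\virt}$, with general $n$ by iterating the product formula and convex embeddings handled by virtual pullback. Your write-up just makes explicit the steps the paper leaves implicit (including the induction, where the product formula is applied over $\Kup(X)$ with one factor the multi-root stack $\Zcal$), so there is nothing substantive to correct.
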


Given this, the (counter)examples presented in \cite[\S1]{NR19} and \cite[\S 3.4]{NabijouThesis} show that the orbifold invariants and logarithmic invariants differ in general, and that this defect is not restricted to the maximal contact setting.

The naive spaces provide an alternative perspective for probing the geography and invariants of the multi-root spaces. The iterated blowup construction of \cite{NR19} gives a method for comparing the logarithmic invariants to the naive/orbifold invariants; see also \cite{R19b,HerrProduct} for treatments of related ideas.

\footnotesize
\bibliographystyle{alpha}
\bibliography{Bibliography.bib}
\medskip

\textsc{Mathematisches Institut, Ruprecht-Karls-Universit\"at Heidelberg, Im Neuenheimer Feld 205, 69120 Heidelberg, Germany}

\indent \emph{Email address:} \href{lbattistella@mathi.uni-heidelberg.de}{lbattistella@mathi.uni-heidelberg.de} \medskip

\textsc{Department of Pure Mathematics and Mathematical Statistics, University of Cambridge, Centre for Mathematical Sciences, Wilberforce Road, Cambridge CB3 0WB, United Kingdom}

\indent \emph{Email address:} \href{nn333@cam.ac.uk}{nn333@cam.ac.uk} \medskip

\textsc{Department of Mathematics, Ohio State University, 100 Math Tower, 231 West 18th Ave., Columbus, OH 43210, USA}

\indent \emph{Email address:} \href{hhtseng@math.ohio-state.edu}{hhtseng@math.ohio-state.edu} \medskip

\textsc{Department of Mathematics, University of Oslo, Niels Henrik Abels hus, Moltke Moes vei 35, 0851 Oslo, Norway}

\indent \emph{Email address:} \href{youf@math.uio.no}{youf@math.uio.no}

\end{document}